\documentclass[12pt]{article}
\usepackage[margin=1in]{geometry}
\usepackage{amsmath, amsthm, amssymb, mathtools, bm,mathrsfs}
\usepackage{natbib}
\usepackage{bbm}
\usepackage{booktabs} 
\usepackage{longtable}
\usepackage{enumitem}
\usepackage{hyperref}
\usepackage{comment}
\usepackage{xcolor}
\usepackage{algorithm}
\usepackage[noend]{algpseudocode}
\title{Notes on constants for maxima of Rademacher averages}
\author{Woonyoung Chang\footnote{\href{mailto:woonyouc@gmail.com}{woonyouc@gmail.com}}}
\date{\today}

\newtheorem{theorem}{Theorem}[section]

\newtheorem{lemma}[theorem]{Lemma}
\newtheorem{proposition}[theorem]{Proposition}

\theoremstyle{definition}

\DeclarePairedDelimiterX{\mnorm}[1]
{\vvvert}
{\vvvert}
{\ifblank{#1}{\:\cdot\:}{#1}}

\newcommand{\Abs}[1]{\left|#1\right|}

\newcommand{\Set}[1]{\left\{#1\right\}}
\newcommand{\set}[1]{\{#1\}}

\newcommand{\Eb}{\mathbb{E}}

\newcommand{\Nb}{\mathbb{N}}

\newcommand{\Pb}{\mathbb{P}}

\newcommand{\Rb}{\mathbb{R}}

\begin{document}

\maketitle
\begin{abstract}
    Let $\epsilon_{ij}, i,j\geq 1$ be independent Rademacher variables. We prove
    \begin{equation*}
        \Eb\max_{1\leq j\leq p}\Abs{\frac{1}{n}\sum_{i=1}^n\epsilon_{ij}}\geq\min\Set{\frac{255}{256},\frac{1}{\sqrt{2\log 2}}\sqrt{\frac{\log(2p)}{n}}}.
    \end{equation*} The equality is attained, for instance, by $(n,p)=(2,1)$ and $(n,p)=(2,8).$ We also discuss the optimality of the numerical constants.
\end{abstract}

\section{Introduction}
The maximal inequality
\begin{equation}\label{eq:upper}
\Eb\max_{1\leq j\leq p}\Abs{\frac1n\sum_{i=1}^n\epsilon_{ij}}
\leq
\min\Set{1,\sqrt{\frac{2\log(2p)}{n}}}
\end{equation}
for independent Rademacher variables $\epsilon_{ij}$ is standard: the first bound is trivial and the second follows from the sub-Gaussian moment generating function together with the usual union-bound argument for finite maxima \citep{Ledoux2011Probability,Boucheron2013}. A matching lower bound,
\begin{equation}\label{eq:lower-form}
\Eb\max_{1\leq j\leq p}\Abs{\frac1n\sum_{i=1}^n\epsilon_{ij}}
\geq
\min\Set{c,\,C\sqrt{\log(2p)/n}},\qquad n,p\geq1,
\end{equation}
is implicit wherever \eqref{eq:upper} is used to argue that a rate cannot be improved, but the literature does not record sharp constants $c,C$ valid simultaneously for every $n,p\geq1$, as opposed to asymptotically as $n,p\to\infty$.

The case $p=1$ already pins down the best possible value of $C$. By the sharp $L_1$-Khintchine inequality of \citet{Szarek1976best} and \citet{Haagerup1981best}, for $a_1,\dots,a_n\in\Rb$,
\[
\Eb\Abs{\sum_{i=1}^n a_i\epsilon_i}\geq\frac{1}{\sqrt2}\left(\sum_{i=1}^na_i^2\right)^{1/2},
\]
with $1/\sqrt2$ unimprovable. Taking $a_i\equiv1/\sqrt n$ gives 
\begin{equation*}
\Eb\Abs{\frac{1}{n}\sum_{i=1}^n\epsilon_i}\geq \frac{1}{\sqrt{2n}} = C\sqrt{\frac{\log 2}{n}},
\end{equation*}
with $C=1/\sqrt{2\log2}$. This note shows that this value of $C$, paired with $c=255/256$, makes \eqref{eq:lower-form} valid for all $n,p\geq1$ (Theorem~\ref{thm:main}), that neither constant can be improved without weakening the other (Theorem~\ref{thm:sharp}).

\section{Main results}
\begin{theorem}\label{thm:main} For every $n\geq 1$ and $p\geq1$,
    \begin{equation}\label{eq:thm:main}
        \Eb\max_{1\leq j\leq p}\Abs{\frac{1}{n}\sum_{i=1}^n\epsilon_{ij}}\geq\min\Set{\frac{255}{256},\frac{1}{\sqrt{2\log 2}}\sqrt{\frac{\log(2p)}{n}}}.
    \end{equation}
\end{theorem}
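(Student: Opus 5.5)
Throughout, write $S_j=\frac1n\sum_{i=1}^n\epsilon_{ij}$ and $M=\max_{j\le p}|S_j|$. Since $M\le1$, we have the identity
\[
\Eb M=\int_0^1\Pb(M>t)\,dt=\int_0^1\bigl(1-(1-q(t))^p\bigr)\,dt,\qquad q(t)=\Pb(|S_1|>t).
\]
The plan is to split into regimes according to the size of $\log(2p)/n$. In each regime we either compute $\Eb M$ exactly or bound it from below using this tail integral together with a sharp lower bound on the binomial tail $q(t)$.

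\textbf{Small $p$ relative to $n$, i.e.\ $\log(2p)\lesssim n$.} For $p=1$ the claim is exactly the Szarek--Haagerup bound recorded in the introduction. For general $p$ we need an anti-concentration estimate of the form
\[
q(t)\ \ge\ \text{const}\cdot\exp\bigl(-c\,nt^2\bigr),
\]
valid for $t$ on the lattice $\{k/n\}$. This can come from Stirling-type bounds on binomial coefficients, or from Paley--Zygmund applied to $S_1^2$ for small $t$. We then choose $t_*=\sqrt{\log(2p)/(2\log2\cdot n)}$, which is the target value, and bound
\[
\Eb M\ \ge\ \int_0^{t_*}\bigl(1-(1-q(t))^p\bigr)\,dt .
\]
The constant $1/\sqrt{2\log 2}$ is exactly tight at $p=1$ and is certainly not tight for large $p$ (the asymptotic constant is $\sqrt2$). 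So for $p\ge2$ there is slack, and crude tail bounds should suffice once $n$ is moderately large. The finitely many small pairs $(n,p)$ left over would be checked by exact computation of $q$, which is a finite binomial sum.

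\textbf{Large $p$, i.e.\ $\log(2p)/n$ large enough that the target is $255/256$.} Here $\Pb(M=1)=1-(1-2^{1-n})^p$. When $n$ is small relative to $\log p$ this is close to $1$. More precisely, we use
\[
\Eb M\ \ge\ \Pb(M=1)+\Bigl(1-\tfrac2n\Bigr)\bigl(\Pb(M\ge 1-\tfrac2n)-\Pb(M=1)\bigr),
\]
and bound $(1-x)^p\le e^{-px}$. The threshold $255/256$ must be reached exactly when $\frac{1}{\sqrt{2\log2}}\sqrt{\log(2p)/n}\ge 255/256$. The equality case $(n,p)=(2,8)$ indicates how this works: there $\Eb M=1-\tfrac12\bigl(\tfrac12\bigr)^8=255/256$, since $|S_j|\in\{0,1\}$ with probability $\tfrac12$ each. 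So for $n=2$ we have $\Eb M=1-2^{-p-1}$ exactly, which is $\ge 255/256$ iff $p\ge 8$. We then verify that for $p<8$ this quantity dominates the square-root branch. Similarly, for each fixed small $n$, $\Eb M$ is an explicit function of $p$ via the tail integral, and a direct comparison is needed.

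\textbf{Main obstacle.} The hard part is uniformity across the intermediate regime, where neither asymptotic is accurate and the constant is tight, at least at $n=2$. I would handle it as follows. First, prove an explicit tail lower bound, something like
\[
\Pb\bigl(|S_1|\ge t\bigr)\ \ge\ \tfrac12\exp\bigl(-nt^2\log 2\bigr)\quad\text{for } t\in\{k/n\},
\]
using the fact that $\binom{n}{k}$ is log-concave in $k$ together with endpoint matching at $t=1$, where equality holds. This bound gives
\[
\Eb M\ \ge\ \int_0^1\Bigl(1-\exp\bigl(-\tfrac p2\, 2^{-nt^2}\bigr)\Bigr)\,dt,
\]
after accounting for the lattice discretization. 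Second, show analytically that this integral exceeds the minimum in \eqref{eq:thm:main} for all $n\ge N_0$. Third, treat $n<N_0$ by exact finite enumeration over $p$ up to the point where the $\Pb(M=1)$ bound takes over. Getting the discretization loss small enough that $N_0$ is modest is where I expect most of the work to lie.
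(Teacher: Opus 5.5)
Your regime split matches the paper's: Szarek--Haagerup for $p=1$, the exact value at $n=2$, the bound $\Pb(M=1)\ge 1-\exp(-p2^{1-n})$ when $\log(2p)\ge n$, and a tail integral plus a computer check otherwise. The gap is that the tail estimate you propose for the middle regime is too weak to prove the theorem. The bound $\Pb(|S_1|\ge t)\ge\frac12\,2^{-nt^2}$ is far from Gaussian-sharp in the window $t\asymp n^{-1/2}$. That window is where small $p$ lives, and the theorem has little slack there: at $p=2$, $\sqrt n\,\Eb M\to\Eb\max(|G_1|,|G_2|)=2/\sqrt\pi\approx1.128$, against a target of $1$. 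Discretization only lowers your bound, so with $x=\sqrt n\,t$ your integral is at most
\[
\frac{1}{\sqrt n}\int_0^\infty\Bigl(1-\exp\bigl(-\tfrac p2\,2^{-x^2}\bigr)\Bigr)dx=\frac{1}{2}\sqrt{\frac{\pi}{n\log2}}\,\sum_{k\ge1}\frac{(-1)^{k+1}(p/2)^k}{k!\sqrt k}.
\]
This is about $0.772/\sqrt n$ for $p=2$ (target $1/\sqrt n$), $1.008/\sqrt n$ for $p=3$ (target $1.137/\sqrt n$), and $1.185/\sqrt n$ for $p=4$ (target $1.225/\sqrt n$). So your second step, that the integral exceeds the minimum for all $n\ge N_0$, is false for $p\in\{2,3,4\}$ however large $N_0$ is. Your third step only covers $n<N_0$. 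Using the exact $1-(1-q)^p$ instead of $1-e^{-pq}$ does not rescue it: you still get about $0.876$ at $p=2$.

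What is missing is a binomial tail lower bound that reproduces the Gaussian tail in the moderate-deviation window, together with the sharp Gaussian maximal constant. The paper uses the Zubkov--Serov inequality $F_n(k)\ge\Phi(-\sqrt{2n\,h(\cdot)})$, compares the layer sum with $\Eb\max_j|G_j|$, and uses $\inf_p\Eb\max_j|G_j|/\sqrt{\log(2p)}=\sqrt{2/(\pi\log2)}$. Even then the margin is only $0.8494$ against $0.8493$, it requires $n\ge277$, and a separate single-level argument with $r=\lfloor\gamma\sqrt{n\log(2p)}\rfloor$ handles $\log(2p)>\theta n$.

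Several smaller points also need fixing.
\begin{itemize}
\item Truncating at $t_*$ in your small-$p$ paragraph is self-defeating. The integrand is at most $1$, so $\int_0^{t_*}(1-(1-q(t))^p)\,dt\le t_*$, and the contribution beyond $t_*$ is indispensable.
\item For $n=2$, $\Eb M=1-2^{-p}$, not $1-2^{-p-1}$. Your expression gives $511/512$ at $p=8$ and would put the threshold at $p\ge7$.
\item Your tail bound is not an equality at $t=1$: the left side is $2^{1-n}$ and the right side is $2^{-n-1}$.
\item Exact enumeration over $p$ for fixed $n<N_0$ is infeasible once $n$ is moderate, because $p$ runs up to about $e^n/2$. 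The paper instead treats $\log(2p)$ as a continuous parameter and uses monotonicity of the exact layer formula $L_n$. It then certifies $L_n(u)/\sqrt{nv}$ on a finite dyadic cover of $[\log4,n]$.
\end{itemize}
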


Theorem~\ref{thm:main} identifies a single pair of numerical constants that makes the lower bound \eqref{eq:lower-form} valid simultaneously for every sample size $n$ and every dimension $p$. Sudakov-type minoration \citep[Ch.~3]{Ledoux2011Probability} gives a matching lower bound up to an unspecified absolute constant and only once $p$ is large enough relative to $n$, while the central limit theorem gives an exact asymptotic constant only in the limit $n\to\infty$ for fixed $p$ (c.f. Proposition~\ref{prop:upper}.)

The constant $255/256$ plays no special role beyond being attainable (for instance, $(n,p)=(2,8)$). The constant $1/\sqrt{2\log2}$, by contrast, is exactly the one forced by the $p=1$ case via the sharp Khintchine inequality \citep{Szarek1976best,Haagerup1981best}, so it cannot be replaced by anything larger without breaking the bound at the simple instances of $(n,p)$. Theorem~\ref{thm:sharp} shows that the pair $(255/256,\,1/\sqrt{2\log2})$ in Theorem~\ref{thm:main} cannot be jointly improved. 
\begin{theorem}\label{thm:sharp}
Suppose \eqref{eq:lower-form} holds for some $c,C>0$ and all $n,p\geq1$. If $c>1/2$, then $C\leq1/\sqrt{2\log2}$. If $C\geq1/\sqrt{2\log2}$, then $c\leq255/256$.
\end{theorem}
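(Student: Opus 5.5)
Theorem~\ref{thm:sharp} reduces to evaluating both sides of \eqref{eq:lower-form} at two specific pairs $(n,p)$, namely $(2,1)$ and $(2,8)$. At each pair I will compute the left-hand side exactly and then read off what the bound forces on $c$ and $C$.

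\emph{First claim.} I take $(n,p)=(2,1)$. Here $\frac12(\epsilon_1+\epsilon_2)$ equals $\pm1$ with probability $1/2$ and $0$ otherwise, so
\[
\Eb\Abs{\tfrac12(\epsilon_1+\epsilon_2)}=\tfrac12 .
\]
The bound \eqref{eq:lower-form} therefore reads $\tfrac12\geq\min\{c,\,C\sqrt{\log 2/2}\}$. Since $c>1/2$, the minimum must be attained by the second term, which gives $C\sqrt{\log2/2}\leq 1/2$. Rearranging, $C\leq 1/\sqrt{2\log2}$.

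\emph{Second claim.} I take $(n,p)=(2,8)$. Each column average $|\frac12(\epsilon_{1j}+\epsilon_{2j})|$ lies in $\{0,1\}$, equal to $1$ with probability $1/2$, and the columns are independent. The maximum is $0$ only if all eight columns vanish, so
\[
\Eb\max_{j\leq 8}\Abs{\tfrac12(\epsilon_{1j}+\epsilon_{2j})}=1-2^{-8}=\tfrac{255}{256}.
\]
On the right-hand side, $\log(2p)=\log 16=4\log2$. Using $C\geq 1/\sqrt{2\log2}$,
\[
C\sqrt{\log 16/2}\;\geq\;\sqrt{\frac{4\log 2}{2\cdot 2\log 2}}=1 .
\]
Since $255/256<1$, the minimum cannot be the second term, so it equals $c$. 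The bound then says $255/256\geq c$.

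\emph{Expected difficulty.} I expect no step to be a real obstacle. The only nontrivial insight is choosing $p=8$ so that $C\sqrt{\log(2p)/n}$ is exactly $1$ when $n=2$. This makes the Khintchine-driven term inactive, and the bound then isolates $c$ against $1-2^{-p}$.
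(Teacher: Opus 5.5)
Your proposal is correct and follows the paper's proof exactly: evaluate \eqref{eq:lower-form} at $(n,p)=(2,1)$ and $(n,p)=(2,8)$, where the left-hand sides are exactly $1/2$ and $255/256$. Your explicit check that $C\sqrt{\log 16/2}\geq 1 > 255/256$ under $C\geq 1/\sqrt{2\log 2}$ fills in a step the paper leaves implicit.
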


Comparing \eqref{eq:thm:main} with the upper bound \eqref{eq:upper} shows that the natural normalization for the maximal average of Rademacher variables is $\min\{1,\sqrt{\log(2p)/n}\}$ in non-asymptotic sense. The next two results restate this fact as a sharp two-sided normalization.

\begin{proposition}\label{prop:upper}
    One has
    \begin{align*}
        &\sup_{n\geq 1, p\geq 1}\frac{\Eb\max_{1\leq j\leq p}\Abs{n^{-1}\sum_{i=1}^n\epsilon_{ij}}}{\min\set{1,\sqrt{\log(2p)/n}}} = \sqrt{2},\quad\mbox{and}\\
        &\inf_{n\geq 1, p\geq 1}\frac{\Eb\max_{1\leq j\leq p}\Abs{n^{-1}\sum_{i=1}^n\epsilon_{ij}}}{\min\set{1,\sqrt{\log(2p)/n}}} =\frac{1}{\sqrt{2\log 2}}.
    \end{align*}
\end{proposition}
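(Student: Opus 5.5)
For the infimum I will use Theorem~\ref{thm:main} and one explicit instance. Write $C=1/\sqrt{2\log2}\approx0.849$, $s=\sqrt{\log(2p)/n}$, and let $R(n,p)$ denote the ratio in the statement. I will split into two cases according to which term attains $\min\{1,s\}$.

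\emph{Case $s\ge1$.} The denominator is $1$. Theorem~\ref{thm:main} gives $R\ge\min\{255/256,\,Cs\}\ge\min\{255/256,\,C\}=C$.

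\emph{Case $s<1$.} The denominator is $s$. Theorem~\ref{thm:main} gives $R\ge\min\{(255/256)/s,\,C\}$. Since $(255/256)/s>255/256>C$, this minimum is $C$.

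So $\inf R\ge C$. For equality, take $(n,p)=(2,1)$. Then $\Eb|(\epsilon_1+\epsilon_2)/2|=1/2$ and $\min\{1,\sqrt{\log2/2}\}=\sqrt{\log 2/2}$. The ratio is $(1/2)/\sqrt{\log2/2}=1/\sqrt{2\log2}$, so the infimum is attained.

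For the supremum, the upper bound comes directly from \eqref{eq:upper}. With $L=\log(2p)$, we have $\min\{1,\sqrt{2L/n}\}\le\sqrt2\,\min\{1,\sqrt{L/n}\}$, checked by comparing the cases $L/n\ge1$, $1/2\le L/n<1$, and $L/n<1/2$. Hence $R\le\sqrt2$ for all $n,p$.

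For the matching lower bound I will use a CLT-plus-Gaussian-extremes diagonal argument, restricting to $n\ge\log(2p)$ so that the denominator equals $s$.

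\emph{Step 1 (fixed $p$, $n\to\infty$).} Fix $p$. By the multivariate CLT, $\max_j|n^{-1/2}\sum_i\epsilon_{ij}|\to\max_j|g_j|$ in distribution, with $g_j$ i.i.d.\ standard Gaussian. Uniform integrability follows because the second moment of the maximum is at most $p$. Hence $\sqrt n\,\Eb\max_j|n^{-1}\sum_i\epsilon_{ij}|\to\Eb\max_{j\le p}|g_j|$. Consequently $\lim_{n\to\infty}R(n,p)=\Eb\max_{j\le p}|g_j|/\sqrt{\log(2p)}$.

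\emph{Step 2 (Gaussian extremes, $p\to\infty$).} I will show $\Eb\max_{j\le p}|g_j|\ge(1-\delta)\sqrt{2\log p}\,(1-o(1))$ for each $\delta>0$. Put $t=(1-\delta)\sqrt{2\log p}$. Then
\[\Pb(\max_j|g_j|\le t)=(1-2\bar\Phi(t))^p\le\exp(-2p\bar\Phi(t)).\]
The Mills-ratio lower bound gives $p\bar\Phi(t)\gtrsim p^{1-(1-\delta)^2}/\sqrt{\log p}\to\infty$, so this probability tends to $0$. Since $\max_j|g_j|\ge0$, it follows that $\Eb\max_j|g_j|\ge t\,\Pb(\max_j|g_j|>t)$. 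Therefore $\liminf_p \Eb\max_j|g_j|/\sqrt{\log(2p)}\ge\sqrt2$.

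\emph{Step 3 (diagonal choice).} For each $p$, choose $n_p\ge\log(2p)$ large enough that $R(n_p,p)$ is within $1/p$ of its Step~1 limit. Then $R(n_p,p)\to\sqrt2$, so the supremum equals $\sqrt2$, though it need not be attained.

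The only step requiring real care is Step~2. It is routine, but the Mills-ratio estimate must be applied at the right scale.
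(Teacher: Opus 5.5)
Your proof is correct, and for the supremum it is essentially the paper's argument. The sub-Gaussian Chernoff bound gives $R\le\sqrt2$. The CLT with uniform integrability identifies $\lim_{n\to\infty}R(n,p)$; you get uniform integrability from the bound $p$ on the second moment of the maximum, where the paper cites Vitali's theorem. The Mills-ratio estimate at level $(1-\delta)\sqrt{2\log p}$ then handles $p\to\infty$, and your diagonal choice $n_p$ replaces the paper's iterated limit.

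You also supply two steps the paper's appendix proof leaves implicit. First, you use the trivial bound $1$ when $\log(2p)\ge n$, which is needed because the paper only invokes $\sqrt{2\log(2p)/n}$. Second, you derive the infimum from Theorem~\ref{thm:main} together with the instance $(n,p)=(2,1)$, which the paper's proof never treats separately.
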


\section{Proofs}
\subsection{Proof of Theorem~\ref{thm:main}}

First consider \(p=1\). The required inequality follows from \cite{Szarek1976best} and \cite{Haagerup1981best},
\begin{equation}\label{eq:khinchine-equal}
    \Eb\Abs{\sum_{i=1}^n\epsilon_{i1}}\geq \sqrt{\frac n2}.
\end{equation}
\paragraph{Case 1. $p\geq 2$ and $\log(2p)\geq n$.}
If $n=1$, then \eqref{eq:thm:main} follows immediately since the left-hand side equals to 1. If $n=2$, then
\begin{equation*}
    (\mbox{L.H.S. of \eqref{eq:thm:main}}) = \Eb\max_{1\leq j\leq p}\Abs{\frac{\epsilon_{1j}+\epsilon_{2j}}{2}}=1-2^{-p}.
\end{equation*} For $p\geq 8$, the right-hand side is at least $255/256$. For $4\leq p\leq 7$, the direct evaluation verifies that
\begin{equation*}
    1-2^{-p}\geq \frac{1}{\sqrt{2\log 2}}\sqrt{\frac{\log(2p)}{2}}.
\end{equation*} Thus, \eqref{eq:thm:main} holds when $n=2$ and $\log(2p)>2$. 

Suppose now that $n\geq 3$ and $\log2p(>n).$ The independence across $j\in[p]$ implies that
\begin{equation*}
    \Pb\left(\Abs{\sum_{i=1}^n\epsilon_{ij}}=n\right)=2^{1-n},\quad\mbox{and}\quad\Pb\left(\exists j\in[p]~s.t.~\Abs{\sum_{i=1}^n\epsilon_{ij}}=n\right)=1-\left(1-2^{1-n}\right)^p.
\end{equation*} Therefore,
\begin{equation}\label{eq:allsign}
    \Eb\max_{1\leq j\leq p}\left|\frac{1}{n}\sum_{i=1}^n\epsilon_{ij}\right|
    \geq
    1-\left(1-2^{1-n}\right)^p\geq
    1-\exp\left(-p2^{1-n}\right),
\end{equation} where the last inequality is due to that $1-x\leq e^{-x},$ $x\geq 0$. Put $u = \log(2p)/n>1$. It suffices to show that
\begin{equation*}
    1-\exp\left(-e^{n(u-\log 2)}\right)\geq \min\Set{\frac{255}{256}, \sqrt{\frac{u}{2\log 2}}}
\end{equation*} Define 
\begin{equation*}
    u_0 = 2\times \left(\frac{255}{256}\right)^2\times\log 2,\quad\mbox{and thus,}\quad\frac{255}{256} = \sqrt{\frac{u_0}{2\log 2}}.
\end{equation*} For $u\geq u_0$, since $n\geq 3$, we deduce that
\begin{equation*}
    1-\exp\left(-e^{n(u-\log 2)}\right)\geq1-\exp\left(-e^{3(u_0-\log 2)}\right)\geq 0.9995 \geq \frac{255}{256} = \min\Set{\frac{255}{256}, \sqrt{\frac{u}{2\log 2}}}.
\end{equation*} For $1<u<u_0$, we shall prove that
\begin{equation*}
    1-\exp\left(-e^{3(u-\log 2)}\right) \geq \sqrt{\frac{u}{2\log 2}},
\end{equation*} or equivalently, by taking $x = \sqrt{u/(2\log 2)}$,
\begin{equation*}
    f(x):=2^{3(2x^2-1)}+\log(1-x),\qquad \frac{1}{\sqrt{2\log 2}}<x<\frac{255}{256}.
\end{equation*} Then
\begin{equation*}
    f'(x)=12(\log2)x2^{3(2x^2-1)}-\frac1{1-x}.
\end{equation*} Put $g(x) = \log[(1-x)12(\log2)x2^{3(2x^2-1)}]$, so that ${\rm sgn}(g)={\rm sgn}(f')$. It can be shown that
\begin{equation*}
    g'(x) =  \frac1x-\frac1{1-x}+12(\log2)x,\quad g''(x) = -\frac1{x^2}-\frac1{(1-x)^2}+12\log2<0.
\end{equation*} Since $g(1/\sqrt{2\log 2})>0>g(255/256)$ and $g$ is concave, there exists a unique point $x_0\in(1/\sqrt{2\log 2},255/256)$ such that $g(x_0)=0$ and $g>0$ for $x\in(1/\sqrt{2\log 2}, x_0)$ and $g<0$ for $x\in(x_0,255/256)$. This implies that $f$ is increasing on $(1/\sqrt{2\log 2}, x_0)$ and is decreasing on $(x_0,255/256).$ Hence,
\begin{equation*}
    \min\Set{f(x):\frac{1}{\sqrt{2\log 2}}\leq x\leq \frac{255}{256}} = \min\Set{f\left(\frac{1}{\sqrt{2\log 2}}\right),f\left(\frac{255}{256}\right)}>0.
\end{equation*} This closes the Case 1.

\paragraph{Case 2. $p\geq 2$ and $\log(2p)< n$.}
We first prove the following proposition.
\begin{proposition}\label{prop:analytic}
For every \(n\geq277\), \(p\geq2\), and \(\log(2p)<n\),
\begin{equation*}
    \Eb\max_{1\leq j\leq p}
    \Abs{\frac{1}{n}\sum_{i=1}^n\epsilon_{ij}}
    \geq
    \frac{1}{\sqrt{2\log2}}\sqrt{\frac{\log(2p)}{n}}.
\end{equation*}
\end{proposition}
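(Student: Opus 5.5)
Write $S_j=\sum_{i=1}^n\epsilon_{ij}$, $M=\max_j|S_j|/n$, and $t=\sqrt{\log(2p)/n}\in(0,1)$. The target is $\Eb M\ge t/\sqrt{2\log 2}\approx 0.849\,t$. Since $M\ge 0$, the plan is to use the tail-integral lower bound
\[
\Eb M\ge \int_0^{a}\Pb(M>s)\,ds\ge a\,\Pb(M\ge a),
\]
taking $a=\alpha t$ for a fixed $\alpha$ slightly above $1/\sqrt{2\log 2}$, say $\alpha\in(0.85,0.95)$. Independence across $j$ gives
\[
\Pb(M\ge a)=1-\bigl(1-q\bigr)^p\ge 1-\exp(-pq),\qquad q=\Pb(|S_1|\ge an).
\]
It then suffices to show $\alpha(1-e^{-pq})\ge 1/\sqrt{2\log 2}$. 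This needs $pq$ bounded below by a constant $K$ with $1-e^{-K}\ge 1/(\alpha\sqrt{2\log2})$. For example, with $\alpha=0.95$ we need $1-e^{-K}\ge 0.894$, i.e.\ $K\approx 2.25$.

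\textbf{Lower-bounding $pq$.} This uses a sharp binomial tail lower bound. With $k=\lceil n(1+a)/2\rceil$, we have $q\ge 2\binom{n}{k}2^{-n}$. By Stirling with explicit error terms (Robbins' bounds), $\binom{n}{k}2^{-n}\ge \frac{c}{\sqrt{n}}\exp(-n\,D((1+a)/2\,\|\,1/2))$, and $D\le a^2/2+a^4/(12(1-a^2))$ (the relative entropy exceeds $a^2/2$ only at fourth order). Since $p=e^{nt^2}/2$, this gives
\[
pq\gtrsim \frac{1}{\sqrt{n(1-a^2)}}\exp\Bigl(nt^2(1-\alpha^2/2)-O(n a^4)\Bigr).
\]
Because $\alpha^2/2<1/2$, the exponent is at least about $nt^2/2$. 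There is, however, a prefactor of order $1/\sqrt n$, as well as rounding from the ceiling and a quartic correction. So $pq\ge K$ is easy once $nt^2=\log(2p)$ is moderately large, say $\log(2p)\ge C_0\log n$.

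\textbf{The small-$p$ regime.} The remaining case is $\log(2p)\lesssim \log n$, so $t\lesssim\sqrt{\log n/n}$, which is small when $n\ge 277$. Here the single-tail argument is too lossy, since for bounded $p$ the factor $1-e^{-pq}$ is not close to $1$. I would instead compare with the Gaussian: $\Eb M\ge \Eb|S_1|/n$ together with the exact value $\Eb|S_1|=n\binom{n-1}{\lfloor (n-1)/2\rfloor}2^{1-n}\ge\sqrt{2n/\pi}\,(1-O(1/n))$. This already handles $p$ up to roughly $\log(2p)\le 4\log 2/\pi\cdot$(slack). For intermediate $p$, use the full tail integral $\int_0^\infty \bigl(1-(1-q(s))^p\bigr)\,ds$ with $q(s)$ bounded below by a Berry--Esseen/Slud-type bound. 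Slud's inequality gives $\Pb(S_1\ge k)\ge \Pb(Z\ge k/\sqrt n)$ for $k\le n/4$, which is exactly what is needed. Then $\Eb M\ge \frac{1}{\sqrt n}\Eb\max_j|Z_j|$ up to discretization $O(1/n)$ and truncation at $a\le 1/4$. One then checks the Gaussian inequality $\Eb\max_{j\le p}|Z_j|\ge (1+\delta)\sqrt{\log(2p)/(2\log2)}$ for all $p\ge2$; this holds with room, since the Gaussian maximum constant is about $\sqrt{2}$ versus the required $0.85$. The room absorbs the $O(1/\sqrt n)$ lattice error when $n\ge 277$.

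\textbf{Main obstacle.} The hard part is the quantitative bookkeeping: making the discretization and truncation errors explicit and uniform in $p$, so that the threshold comes out as exactly $n\ge 277$. I would first try Slud's inequality plus the lattice-shift error $1/n$ and see which $n$ the constants force. The cases $n<277$ are presumably left to a finite computation in the paper.
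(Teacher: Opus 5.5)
\textbf{There is a genuine gap, in the small-$p$ regime.} Your large-$p$ step is sound and is essentially the paper's second case $\theta n<\log(2p)<n$; that step uses a single threshold, $\Pb(M\ge a)\ge 1-e^{-pq}$, and an entropy lower bound on $q$. In the small-$p$ regime, however, everything rests on the claim that Slud's inequality gives $\Pb(S_1\ge k)\ge\Pb(Z\ge k/\sqrt n)$ for $k\le n/4$. Slud's inequality for $\mathrm{Bin}(n,\pi)$ needs $\pi\le 1/4$, or $\pi\le 1/2$ together with $n\pi\le k'\le n(1-\pi)$. At $\pi=1/2$ the admissible range shrinks to the single point $k'=n/2$, so it says nothing about $\Pb(S_1\ge k)$ for $k>0$.

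The inequality itself is false: for $n=100$ and $k=25$, $\Pb(S_1\ge 25)=\Pb(\mathrm{Bin}(100,\tfrac12)\ge 63)\approx 0.00602<0.00621\approx\Pb(Z\ge 2.5)$. A one-unit lattice shift does not repair it on $k\le n/4$ either. Write $h(x)=D\bigl(\tfrac{1+x}{2}\,\|\,\tfrac12\bigr)$. Then $nh(k/n)-k^2/(2n)\approx k^4/(12n^3)$, while the shift only gains a factor $\approx e^{k/n}$. Hence $\Pb(S_1\ge k)\ge\Pb(Z\ge (k+1)/\sqrt n)$ fails for $k\gtrsim 12^{1/3}n^{2/3}$ of the wrong parity, and such $k$ lie inside $k\le n/4$ once $n$ is large.

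The paper replaces this with two ingredients:
\begin{itemize}
\item the Zubkov--Serov bound $\Pb(|S_1|\ge r)\ge 2\Phi\bigl(-\sqrt{2nh((r+1)/n)}\bigr)$;
\item the estimate $h(x)\le h(\beta)x^2/\beta^2$ on $[0,\beta]$, which follows from the monotonicity of $h(x)/x^2$.
\end{itemize}
Together these give a Gaussian comparison with shift exactly one and variance $n\beta^2/\{2h(\beta)\}$, slightly below $n$, valid for $r+1\le\beta n$. The Gaussian tail beyond $\sqrt{2nh(\beta)}$ is then discarded using $pe^{-x^2/2}$ and $\log(2p)\le\theta n<h(\beta)n$.

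\textbf{The margin is also far thinner than you assume, so the deferred bookkeeping is the actual content.} For small $p$ the relevant Gaussian constant is not $\sqrt2$. It is $\inf_{p\ge1}\Eb\max_{j\le p}|Z_j|/\sqrt{\log(2p)}=\sqrt{2/(\pi\log 2)}\approx 0.958$, attained at $p=1,2$. This has to be proved for every $p$, which is the paper's Proposition~\ref{prop:gaussian-input}.

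With the paper's $\beta$, after the variance adjustment the leading term is $\beta/\sqrt{\pi(\log 2)h(\beta)}\approx 0.952$. The unit shift costs $2/\sqrt{n\log(2p)}\approx 0.102$ at $n=277$, $p=2$. That leaves about $0.8494$ against the target $1/\sqrt{2\log 2}\approx 0.8493$, and this is exactly what pins down the threshold $277$. A comparison lemma with a larger shift, a visibly smaller variance, or a cruder Gaussian constant cannot reach $n\ge 277$.

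Separately, the bound $\Eb M\ge \Eb|S_1|/n\approx\sqrt{2/(\pi n)}$ covers only $\log(2p)\le 4\log 2/\pi<\log 4$, so it handles no $p\ge 2$ at all.
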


\begin{proof}
Write \(t=\log(2p)\).  Let \(N\geq1\), \(0<\theta<1\), \(0<\beta<1\),
and \(0<\gamma<1\).  These four quantities will be fixed at the end.
For
\begin{equation*}
    h(x)=
    \frac{1-x}{2}\log(1-x)
    +
    \frac{1+x}{2}\log(1+x),
    \qquad 0\leq x<1,
\end{equation*}
we first note that \(x\mapsto h(x)/x^2\) is increasing on \((0,1)\).
Indeed,
\begin{equation*}
    \frac{d}{dx}\frac{h(x)}{x^2}
    =
    \frac{x\operatorname{arctanh}(x)-2h(x)}{x^3},
\end{equation*}
and
\begin{equation*}
    \frac{d}{dx}\left(x\operatorname{arctanh}(x)-2h(x)\right)
    =
    \frac{x}{1-x^2}-\operatorname{arctanh}(x).
\end{equation*}
The last expression is nonnegative on \((0,1)\), since it vanishes at
zero and has derivative \(2x^2/(1-x^2)^2\).  Therefore, for every
\(0\leq x\leq \beta\),
\begin{equation*}
    h(x)\leq \frac{h(\beta)}{\beta^2}x^2.
\end{equation*}

We shall repeatedly use the following consequence of the binomial cdf
bound.  If \(1\leq r\leq n-1\), then
\begin{equation*}
\Pb\left\{\left|\sum_{i=1}^n\epsilon_{i1}\right|\geq r\right\}=
    2\Pb\left\{\sum_{i=1}^n\epsilon_{i1}\leq -r\right\}=
    2F_n\left(\left\lfloor \frac{n-r}{2}\right\rfloor\right).
\end{equation*}
Since
\begin{equation*}
    \left\lfloor \frac{n-r}{2}\right\rfloor
    \geq \frac{n-r-1}{2},
\end{equation*}
and \(x\mapsto h(x)\) is increasing on \([0,1)\), the binomial cdf bound \citep{zubkov2012full} gives
\begin{equation*}
    \Pb\left\{\left|\sum_{i=1}^n\epsilon_{i1}\right|\geq r\right\}
    \geq
    2\Phi\left(
        -\sqrt{2n\,h\left(\frac{r+1}{n}\right)}
    \right).
\end{equation*}
Consequently, whenever \((r+1)/n\leq\beta\),
\begin{equation*}
    \Pb\left\{\left|\sum_{i=1}^n\epsilon_{i1}\right|\geq r\right\}
    \geq
    2\Phi\left(
        -\frac{r+1}{\sqrt{n\beta^2/\{2h(\beta)\}}}
    \right).
\end{equation*}

First suppose that \(t\leq \theta n\).  For every integer \(r\) satisfying
\(1\leq r\leq \beta n-1\), the preceding display applies.  Hence, if
\(G_1,\ldots,G_p\) are iid standard normal variables,
\begin{equation*}
\begin{aligned}
    \Eb\max_{1\leq j\leq p}
    \Abs{\sum_{i=1}^n\epsilon_{ij}}
    &\geq
    \sum_{1\leq r\leq \beta n-1}
    \Pb\left\{
        \max_{1\leq j\leq p}|G_j|
        \geq
        \frac{r+1}{\sqrt{n\beta^2/\{2h(\beta)\}}}
    \right\}.
\end{aligned}
\end{equation*}
Since the tail probability in the summand is decreasing in \(r\),
\begin{equation*}
\begin{aligned}
    \Eb\max_{1\leq j\leq p}
    \Abs{\sum_{i=1}^n\epsilon_{ij}}
    &\geq
    \sqrt{\frac{n\beta^2}{2h(\beta)}}
    \left[
        \Eb\max_{1\leq j\leq p}|G_j|
        -\frac{2}{\sqrt{n\beta^2/\{2h(\beta)\}}}
    \right.                                                        \\
    &\hspace{4.1cm}\left.
        -\int_{\sqrt{2nh(\beta)}}^\infty
        \Pb\left\{\max_{1\leq j\leq p}|G_j|\geq x\right\}\,dx
    \right].
\end{aligned}
\end{equation*}
For \(x\geq0\), the union bound and sub-Gaussian tail implies that $\Pb\left\{\max_{1\leq j\leq p}|G_j|\geq x\right\}\leq p e^{-x^2/2}.$ Since \(p=e^t/2\) and \(t\leq\theta n\),
\begin{equation*}
    \int_{\sqrt{2nh(\beta)}}^\infty
    \Pb\left\{\max_{1\leq j\leq p}|G_j|\geq x\right\}\,dx
    \leq
    \frac{1}{2\sqrt{2nh(\beta)}}
    \exp\{-n(h(\beta)-\theta)\}.
\end{equation*}
Combining this with Proposition~\ref{prop:gaussian-input} and
\(t\geq\log4\), we obtain
\begin{equation*}
    \Eb\max_{1\leq j\leq p}
    \frac{\Abs{\sum_{i=1}^n\epsilon_{ij}}}{\sqrt{nt}}\geq
    \frac{\beta}{\sqrt{\pi(\log2)h(\beta)}}
    -\frac{2}{\sqrt{n\log4}}-\frac{\beta}{4h(\beta)\sqrt{n\log4}}
    \exp\{-n(h(\beta)-\theta)\}.
\end{equation*}
Thus the first case follows for all \(n\geq N\), provided $h(\beta)>\theta$ and
\begin{equation}\label{eq:condition:1}
    \frac{\beta}{\sqrt{\pi(\log2)h(\beta)}}
    -\frac{2}{\sqrt{N\log4}}
    -\frac{\beta}{4h(\beta)\sqrt{N\log4}}
    \exp\{-N(h(\beta)-\theta)\}
    >
    \frac{1}{\sqrt{2\log2}}.
\end{equation}

Now suppose that \(\theta n<t<n\).  Let $r=\lfloor \gamma\sqrt{nt}\rfloor.$ Then
\begin{equation*}
    r\geq
    \left(\gamma-\frac{1}{n\sqrt\theta}\right)\sqrt{nt}
\end{equation*}
and
\begin{equation*}
    \frac{r+1}{n}
    \leq
    \left(\gamma+{1\over n\sqrt\theta}\right)\sqrt{t\over n}
    \leq
    \left(\gamma+{1\over N\sqrt\theta}\right)\sqrt{t\over n}.
\end{equation*}
By the binomial cdf lower bound and Mills' lower bound,
\begin{equation*}
    p\Pb\left\{\left|\sum_{i=1}^n\epsilon_{i1}\right|\geq r\right\}
    \geq
    {\exp\{t-nh((r+1)/n)\}
    \over
    \sqrt{2\pi}\{1+\sqrt{2nh((r+1)/n)}\}}  \geq
    {\exp\left(n\left[s-
    h\left(\left(\gamma+{1\over N\sqrt\theta}\right)\sqrt{s}\right)
    \right]\right)
    \over
    \sqrt{2\pi}\left\{
    1+\sqrt{2n
    h\left(\left(\gamma+{1\over N\sqrt\theta}\right)\sqrt{s}\right)}
    \right\}},
\end{equation*} 
where \(s=t/n\).  The final expression is increasing in
\(s\in[\theta,1]\) if
\begin{equation}\label{eq:condition:2}
    {1\over2}\left(\gamma+{1\over N\sqrt\theta}\right)
    \operatorname{arctanh}\left(\gamma+{1\over N\sqrt\theta}\right)
    \left[
        1+
        {1\over
        \gamma\sqrt{N\theta}\{1+\gamma\sqrt{N\theta}\}}
    \right]
    <1.
\end{equation}
Indeed, its logarithmic derivative is bounded from below by
\begin{equation*}
    n\left[
    1-
    {1\over2}\left(\gamma+{1\over N\sqrt\theta}\right)
    \operatorname{arctanh}\left(\gamma+{1\over N\sqrt\theta}\right)
    \left\{
        1+
        {1\over
        \gamma\sqrt{N\theta}\{1+\gamma\sqrt{N\theta}\}}
    \right\}
    \right].
\end{equation*}
Therefore the lower bound for
\(p\Pb\{|\sum_{i=1}^n\epsilon_{i1}|\geq r\}\) is minimized at \(s=\theta\).
If
\begin{equation}\label{eq:condition:3}
    \theta-
    h\left(\left(\gamma+{1\over N\sqrt\theta}\right)\sqrt\theta\right)
    >
    {1\over2N},
\end{equation}
then
\begin{equation*}
    {\exp\left(n\left[
    \theta-
    h\left(\left(\gamma+{1\over N\sqrt\theta}\right)\sqrt\theta\right)
    \right]\right)
    \over
    \sqrt{2\pi}\left\{
    1+\sqrt{2n
    h\left(\left(\gamma+{1\over N\sqrt\theta}\right)\sqrt\theta\right)}
    \right\}}
\end{equation*}
is increasing in \(n\geq N\).  Hence
\begin{equation*}
\begin{aligned}
    {\Eb\max_{1\leq j\leq p}
    \Abs{\sum_{i=1}^n\epsilon_{ij}}\over\sqrt{nt}}
    \geq
    \left(\gamma-{1\over n\sqrt\theta}\right)
    \left[
        1-
        \exp\left\{
        -p\Pb\left(
        \left|\sum_{i=1}^n\epsilon_{i1}\right|\geq r
        \right)
        \right\}
    \right]                                                        \\
    \geq
    \left(\gamma-{1\over N\sqrt\theta}\right)   \times
    \left[
    1-\exp\left\{
    -{\exp\left(N\left[
    \theta-
    h\left(\left(\gamma+{1\over N\sqrt\theta}\right)\sqrt\theta\right)
    \right]\right)
    \over
    \sqrt{2\pi}\left\{
    1+\sqrt{2N
    h\left(\left(\gamma+{1\over N\sqrt\theta}\right)\sqrt\theta\right)}
    \right\}}
    \right\}
    \right].
\end{aligned}
\end{equation*}
Thus the second case follows for all \(n\geq N\), provided
\begin{equation}\label{eq:condition:4}
\left(\gamma-{1\over N\sqrt\theta}\right)
    \left[
    1-\exp\left\{
    -{\exp\left(N\left[
    \theta-
    h\left(\left(\gamma+{1\over N\sqrt\theta}\right)\sqrt\theta\right)
    \right]\right)
    \over
    \sqrt{2\pi}\left\{
    1+\sqrt{2N
    h\left(\left(\gamma+{1\over N\sqrt\theta}\right)\sqrt\theta\right)}
    \right\}}
    \right\}
    \right] >
    {1\over\sqrt{2\log2}}.
\end{equation}

It remains to verify the four scalar conditions.  Take
\begin{equation*}
    N=277,\qquad
    \theta=0.02088467,\qquad
    \beta=0.27828765,\qquad
    \gamma=0.92634339.
\end{equation*}
For these values, $h(\beta)-\theta\geq 0.018>0$, and the left hand side of \eqref{eq:condition:1} equals
\begin{equation*}
    (\mbox{L.H.S. of \eqref{eq:condition:1}})\geq 0.8494
    >
    {1\over\sqrt{2\log2}}.
\end{equation*}
Also, $\gamma+(N\sqrt{\theta})^{-1}<1$, and
\begin{equation*}
    (\mbox{L.H.S. of \eqref{eq:condition:2}})\leq 0.9999 <1.
\end{equation*}
Moreover, \eqref{eq:condition:3} can be verified as
\begin{equation*}
    \theta-
    h\left(\left(\gamma+{1\over N\sqrt\theta}\right)\sqrt\theta\right)\geq 0.011
    >
    {1\over2N}.
\end{equation*}
Finally,
\begin{equation*}
    (\mbox{L.H.S. of \eqref{eq:condition:4}})\geq 0.8494>\frac{1}{\sqrt{2\log 2}}.
\end{equation*}
The two cases cover \(\log4\leq t<n\), since \(p\geq2\).  The proof is complete.
\end{proof}

The remaining finite number of cases are proved via computer-assisted Lemma~\ref{lem:finite-certificate}.

\begin{lemma}\label{lem:finite-certificate}
For every $2\leq n\leq276$, $p\geq2$, and $\log(2p)<n$,
\begin{equation*}
    \Eb\max_{1\leq j\leq p}
    \Abs{\sum_{i=1}^n\epsilon_{ij}}
    \geq
    {1\over\sqrt{2\log2}}\sqrt{n\log(2p)}.
\end{equation*}
\end{lemma}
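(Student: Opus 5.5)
For each fixed $n$ with $2\leq n\leq 276$, I would reduce the claim to a finite, exactly computable check. Write $S_n=\sum_{i=1}^n\epsilon_{i1}$ and let $q_r=\Pb\{|S_n|\geq r\}$ for $r=0,1,\dots,n$. These are exact rational binomial tail sums. By independence across $j$,
\begin{equation*}
    \Eb\max_{1\leq j\leq p}\Abs{\sum_{i=1}^n\epsilon_{ij}}=\sum_{r=1}^{n}\left[1-(1-q_r)^p\right]=:M_n(p).
\end{equation*}
The range $\log(2p)<n$ means $p<e^n/2$. This is finite for each $n$ but astronomically large when $n$ is near $276$, so a direct enumeration over $p$ is impossible. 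The main task is to handle all $p$ in that range with finitely many checks.

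\textbf{Monotonicity and bracketing in $p$.} $M_n(p)$ is increasing in $p$, and the target $T_n(p)=\sqrt{n\log(2p)/(2\log2)}$ is also increasing but very slowly. So I would cover $[2,e^n/2)$ by a geometric grid $p_0=2<p_1<\dots<p_K$ with $p_K\geq e^n/2$. On each block $[p_k,p_{k+1}]$ it suffices to verify
\begin{equation*}
    M_n(p_k)\geq T_n(p_{k+1}),
\end{equation*}
since then $M_n(p)\geq M_n(p_k)\geq T_n(p_{k+1})\geq T_n(p)$. Here $p$ may be treated as a real parameter, which makes a real-valued grid legitimate. Parametrizing by $t=\log(2p)$, one can take the grid points $t_k$ spaced so that $\sqrt{t_{k+1}}-\sqrt{t_k}$ is small compared with the slack $M_n-T_n$. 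An adaptive refinement (halve the interval whenever the inequality fails) would produce the certificate automatically. It is natural to use the same monotone-bracketing device on $p$ that underlies the structure of Case 1.

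\textbf{Rigorous evaluation.} I would compute $(1-q_r)^p=\exp(p\log(1-q_r))$ with interval arithmetic (e.g.\ MPFI or Arb) at sufficient precision, with the $q_r$ exact rationals. This gives rigorous lower bounds for $M_n(p_k)$ and upper bounds for $T_n(p_{k+1})$.

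\textbf{Main obstacle.} The delicate point is where the slack is tiny:
\begin{itemize}
\item small $p$ (e.g.\ $p=2,3$) with moderate $n$, where the constant $1/\sqrt{2\log 2}$ is nearly tight by the Khintchine extremal behaviour;
\item $t$ close to $n$, where $M_n$ approaches $n$ and the target approaches $n/\sqrt{2\log2}\approx0.85n$, so slack should be ample there.
\end{itemize}
For small $p$ I would simply check every integer $p$ individually up to some threshold such as $p\leq 10^4$ rather than bracketing. If some exact integer case is extremely tight, it can be verified with exact rational arithmetic. The total work is a few hundred values of $n$ times a modest number of grid points, which is clearly feasible.
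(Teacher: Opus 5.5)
Your plan is essentially the paper's proof. Your tail-sum identity $\sum_{r=1}^n[1-(1-q_r)^p]$ is the parity-grouped formula of Lemma~\ref{lem:exact-layer} written out term by term. The paper likewise treats $t=\log(2p)$ as a continuous parameter on $[\log 4,n]$, bounds each cell $[u,v]$ by $L_n(u)/\sqrt{nv}$, refines by adaptive dyadic bisection, and certifies with outward-rounded interval arithmetic over exact rational $F_n(k)$. Your proposal leaves implicit only the actual run of the certificate, which the paper supplies (2664 cells, all above $(2\log 2)^{-1/2}$).

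Your extra step of checking small integer $p$ directly is optional but well aimed. The paper's thinnest certified margins (e.g.\ $0.849398$ at $n=194$) appear, judging from where the cell counts and depths jump in its table, to come from the bracketing loss $\sqrt{v/u}$ on the cell next to $t=\log 4$. They do not reflect genuine near-equality in the lemma. One small correction to your commentary: by my own quick computations (not results from the paper), for $p\geq 2$ the exact ratio $\Eb\max_j|\sum_i\epsilon_{ij}|/\sqrt{n\log(2p)}$ stays at roughly $0.90$ or above (e.g.\ $0.9008$ at $(n,p)=(2,2)$). The Khintchine extremal case, where the constant is actually tight, is $p=1$.
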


\begin{proof}
For each fixed sample size $n \in [2, 276]$, let $m = \lfloor n/2 \rfloor$. Using the exact identities from Lemma~\ref{lem:exact-layer}, the target expectation is evaluated via the continuous mapping $L_n: [\log4, n] \to \Rb$ defined by:
\begin{align}
    L_{2m}(t) &= 2\sum_{k=0}^{m-1} \left[ 1 - \exp\left\{ {e^t\over2}\log\left(1-2 F_n(k)\right) \right\} \right], \label{eq:finite-L-even} \\
    L_{2m+1}(t) &= 1 + 2\sum_{k=0}^{m-1} \left[ 1 - \exp\left\{ {e^t\over2}\log\left(1-2 F_n(k)\right) \right\} \right]. \label{eq:finite-L-odd}
\end{align}
Since $L_n(\log(2p)) = \Eb\max_{1\leq j\leq p} \Abs{\sum_{i=1}^n\epsilon_{ij}}$ and each summand is strictly monotonic in $t$, $L_n(t)$ is monotonically increasing.

For a given $n$, the continuous search domain $[\log 4, n]$ is partitioned into a finite collection of non-overlapping compact intervals $\mathcal{I}_n = \{[u_{n,\ell}, v_{n,\ell}] \}_{\ell=1}^{K_n}$ covering $[\log 4, n]$. Each interval is mapped from an integer triple $(a, b, d) \in \Nb^3$ via:
\begin{equation}\label{eq:dyadic-mapping}
    u_{n,\ell} = \log4 + (n-\log4){a\over 2^d}, \qquad v_{n,\ell} = \log4 + (n-\log4){b\over 2^d}, 
\end{equation}
where $d \in \{0, 1, \dots, 10\}$ is the bisection depth and $0 \le a < b \le 2^d$ are the dyadic grid bounds. 

By monotonicity, the infimum over any sub-interval $[u, v]$ satisfies:
\begin{equation}\label{eq:finite-interval-rule}
    \inf_{t \in [u, v]} {L_n(t)\over\sqrt{nt}} \geq \mathcal{L}_n(u, v) := {L_n(u)\over\sqrt{nv}}. 
\end{equation}

The space $\mathcal{I}_n$ is constructed via a recursive, adaptive dyadic bisection algorithm. Starting from the root interval $[a, b, d] = [0, 1, 0]$, the algorithm evaluates $\mathcal{L}_n(u, v)$ using outward-rounded interval arithmetic over exact rational fields for the binomial cumulative probabilities $F_n(k) = 2^{-n}\sum_{j=0}^k \binom{n}{j}$. If $\mathcal{L}_n(u, v) \le (2\log 2)^{-1/2}$, the interval is bisected into $[a, \frac{a+b}{2}, d+1]$ and $[\frac{a+b}{2}, b, d+1]$. Otherwise, the branch is pruned and the interval is stored. Table~\ref{tab:verification_summary} summarizes the resulting verification parameters. Full results are reported in Table~\ref{tab:lemma32-full}.

\begin{table}[htbp]
\centering
\caption{Summary parameters for the deterministic verification over $2 \le n \le 276$.}
\label{tab:verification_summary}
\begin{tabular}{ll}
\toprule
\textbf{Parameter} & \textbf{Value} \\
\midrule
Range of $n$ & $[2, 276]$ \\
Total certified intervals ($\sum K_n$) & $2664$ \\
Maximum intervals per $n$ ($\max_n K_n$) & $11$ \\
Maximum bisection depth ($\max d$) & $10$ \\
Worst-case bound ($\min \mathcal{L}_n$) & $> 0.84939$ \\
Theoretical threshold $(1/\sqrt{2\log2})$ & $<0.84933$ \\
\bottomrule
\end{tabular}
\end{table}

The solver verified that $\mathcal{L}_n(u_{n,\ell}, v_{n,\ell}) > (2\log2)^{-1/2}$ holds uniformly across all $2664$ partition cells. Since these intervals form a valid cover of $[\log 4, n]$, the proof is complete.
\end{proof}

\begin{lemma}\label{lem:exact-layer}
For $n=2m$,
\begin{equation}\label{eq:layer-even}
    \Eb\max_{1\leq j\leq p}\Abs{\sum_{i=1}^n\epsilon_{ij}}
    =
    2\sum_{k=0}^{m-1}
    \left[1-\{1-2F_n(k)\}^p\right].
\end{equation}
For $n=2m+1$,
\begin{equation}\label{eq:layer-odd}
    \Eb\max_{1\leq j\leq p}\Abs{\sum_{i=1}^n\epsilon_{ij}}
    =
    1+2\sum_{k=0}^{m-1}
    \left[1-\{1-2F_n(k)\}^p\right].
\end{equation}
\end{lemma}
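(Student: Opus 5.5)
The plan is to write the expectation of the nonnegative integer-valued variable $M=\max_{j\leq p}|S_j|$, with $S_j=\sum_{i=1}^n\epsilon_{ij}$, via the layer-cake formula $\Eb M=\sum_{r\geq1}\Pb(M\geq r)$. Independence across $j$ then turns each tail into a single-column quantity, and the parity structure of $S_1$ collapses the sum to the stated form.

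First, note that $S_1\equiv n \pmod 2$ and $|S_1|\leq n$. Hence $M$ takes values in $\{n,n-2,n-4,\dots\}$, down to $0$ if $n$ is even and down to $1$ if $n$ is odd.

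For $r\geq1$, independence of the columns gives
\[
\Pb(M\geq r)=1-\Pb(|S_1|<r)^p=1-\{1-\Pb(|S_1|\geq r)\}^p .
\]
By symmetry and the identity already used in the proof of Proposition~\ref{prop:analytic}, $\Pb(|S_1|\geq r)=2F_n(\lfloor (n-r)/2\rfloor)$ for $1\leq r\leq n$. This holds because $S_1=n-2B$ with $B\sim\mathrm{Bin}(n,1/2)$, and $S_1\leq -r$ iff $B\geq (n+r)/2$, whose probability equals $F_n(\lfloor (n-r)/2\rfloor)$ by the symmetry $B\mapsto n-B$. One must check that the events $\{S_1\leq -r\}$ and $\{S_1\geq r\}$ are disjoint for $r\geq1$, which they are.

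For $n=2m$, the admissible values of $|S_1|$ are even. Therefore $\Pb(M\geq 2l-1)=\Pb(M\geq 2l)$ for $l=1,\dots,m$, and
\[
\Eb M=2\sum_{l=1}^m\Pb(M\geq 2l).
\]
With $k=\lfloor (n-2l)/2\rfloor=m-l$ ranging over $0,\dots,m-1$, this yields \eqref{eq:layer-even}.

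For $n=2m+1$, the values of $|S_1|$ are odd. So $\Pb(M\geq1)=1$, and $\Pb(M\geq 2l)=\Pb(M\geq 2l+1)$ for $l=1,\dots,m$. This gives
\[
\Eb M=1+2\sum_{l=1}^m\Pb(M\geq 2l+1),
\]
and now $k=\lfloor (2m+1-2l-1)/2\rfloor=m-l$ again ranges over $0,\dots,m-1$, which yields \eqref{eq:layer-odd}.

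There is no real obstacle. The only care needed is the index bookkeeping: matching floors with parities and making sure the boundary term $k=0$ corresponds to $|S_1|=n$, whose probability is $2F_n(0)=2^{1-n}$, consistent with \eqref{eq:allsign}.
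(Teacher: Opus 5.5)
Your proof is correct and follows essentially the same route as the paper: the tail-sum formula for the integer-valued maximum, parity to collapse pairs of levels, the symmetric binomial identity $\Pb\{|S_1|\geq r\}=2F_n(\lfloor (n-r)/2\rfloor)$, independence across columns, and the reindexing $k=m-l$. You also write out the pairing of consecutive levels and the floor bookkeeping in the odd case, which the paper leaves implicit.
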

\begin{proof}
If $n=2m$, then $\Abs{\sum_{i=1}^n\epsilon_{ij}}$ takes values in $\{0,2,\ldots,2m\}$. Therefore
\begin{equation*}
    \Eb\max_{1\leq j\leq p}\Abs{\sum_{i=1}^n\epsilon_{ij}}
    =2\sum_{s=1}^m
    \Pb\left\{\max_{1\leq j\leq p}\Abs{\sum_{i=1}^n\epsilon_{ij}}\geq2s\right\}.
\end{equation*}
By symmetry,
\begin{equation*}
    \Pb\left\{\Abs{\sum_{i=1}^n\epsilon_{i1}}\geq2s\right\}=2F_n(m-s).
\end{equation*}
Independence across $j$ gives \eqref{eq:layer-even} after the change of variables $k=m-s$. The proof of \eqref{eq:layer-odd} is identical, except that the possible values are $1,3,\ldots,2m+1$, which contributes the leading $1$.
\end{proof}

\appendix
\section{Proofs of Auxiliary Results}
\begin{proof}[proof of Proposition~\ref{prop:upper}.]
The moment generating function of Rademacher variables satisfies that
\begin{equation*}
    \Eb \exp\left(\frac{\lambda}{n}\sum_{i=1}^n\epsilon_{ij}\right)=\prod_{i=1}^n\Eb\exp\left(\frac{\lambda \epsilon_{ij}}{n}\right)=\left(\frac{e^{\lambda/n} + e^{-\lambda/n}}{2}\right)^n\leq \exp\left(\frac{\lambda^2}{2n}\right).
\end{equation*} Therefore, Chernoff bound implies that
\begin{equation*}
    \Eb\max_{1\leq j\leq p}\Abs{\frac{1}{n}\sum_{i=1}^n\epsilon_{ij}}\leq \frac{1}{\lambda}\log \Eb \sum_{j=1}^p\left[\exp\left(\frac{\lambda}{n}\sum_{i=1}^n\epsilon_{ij}\right)+\exp\left(-\frac{\lambda}{n}\sum_{i=1}^n\epsilon_{ij}\right)\right]
    \leq \frac{\log(2p)}{\lambda} + \frac{\lambda}{2n}.
\end{equation*} Taking $\lambda = \sqrt{2n\log(2p)}$ implies that for all $n\geq 1$ and $p\geq 1$. 
\begin{equation}\label{eq:prop:upper:1}
    \Eb\max_{1\leq j\leq p}\Abs{\frac{1}{n}\sum_{i=1}^n\epsilon_{ij}}\leq \sqrt{\frac{2\log(2p)}{n}}.
\end{equation} 

For a fixed $p\geq 1$, the multivariate CLT implies that
\begin{equation*}
    \left(\frac1{\sqrt n}\sum_{i=1}^n\epsilon_{1j},\ldots, \frac1{\sqrt n}\sum_{i=1}^n\epsilon_{pj}\right)^\top \xrightarrow{d}(G_1,\ldots,G_p)^\top,\quad n\to\infty.
\end{equation*} Celebrated Vitali convergence Theorem (Theorem 3.5 of \citep{Billingsley1999}) implies that
\begin{equation}\label{eq:prop:upper:2}
    \lim_{n\to\infty}\Eb\max_{1\leq j\leq p}\left|\frac{1}{\sqrt{n}}\sum_{i=1}^n\epsilon_{ij}\right| =\Eb\max_{1\leq j\leq p}|G_j|.
\end{equation}
By standard Mill's ratio bounds \cite{Gordon1941Mill}, for any $t > 0$:
\begin{equation*}
1-\Phi(t) \ge \frac{1}{\sqrt{2\pi}} \frac{t}{t^2 + 1} e^{-t^2/2}.
\end{equation*}
Fix an arbitrary constant $\alpha \in (0, 1)$, and define the sequence threshold $t_p = \alpha \sqrt{2\log(2p)}$. Utilizing the mutual independence of $G_1, \dots, G_p$, the cumulative distribution function of $\max_{1\leq j\leq p}|G_j|$ evaluated at $t_p$ satisfies:
\begin{equation*}
\mathbb{P}(\max_{1\leq j\leq p}|G_j| \le t_p) = \mathbb{P}(|G_1| \le t_p)^p = \left(2\Phi(t_p)-1\right)^p.
\end{equation*}
Applying the inequality $1 - x \le e^{-x}$ yields:
\begin{equation*}
\mathbb{P}(\max_{1\leq j\leq p}|G_j| \le t_p) \le \exp\left(-2p(1-\Phi(t_p))\right).
\end{equation*}
Now we evaluate the asymptotic behavior of the argument $2p(1-\Phi(t_p))$ as $p \to \infty$:
\begin{equation*}
2p(1-\Phi(t_p)) \ge \frac{2p}{\sqrt{2\pi}} \frac{t_p}{t_p^2 + 1} e^{-t_p^2/2}.
\end{equation*}
Substituting $t_p^2 = 2\alpha^2\log(2p)$, we find $e^{-t_p^2/2} = (2p)^{-\alpha^2}$, which simplifies the expression to:
\begin{equation*}
2p(1-\Phi(t_p)) \ge \frac{(2p)^{1-\alpha^2}}{\sqrt{2\pi}} \frac{t_p}{t_p^2 + 1}.
\end{equation*}
Since $\alpha < 1$, the exponent satisfies $1-\alpha^2 > 0$, implying $(2p)^{1-\alpha^2} \to \infty$. Concurrently, $\frac{t_p}{t_p^2 + 1} \sim \frac{1}{t_p}$, which decays only logarithmically. Thus, $\lim_{p\to\infty} 2p(1-\Phi(t_p)) = \infty$, which dictates:
\begin{equation*}
\lim_{p\to\infty} \mathbb{P}(\max_{1\leq j\leq p}|G_j| \le t_p) \le \lim_{p\to\infty} e^{-2p(1-\Phi(t_p))} = 0.
\end{equation*}
Consequently, $\lim_{p\to\infty} \mathbb{P}(\max_{1\leq j\leq p}|G_j| > t_p) = 1$. Using the non-negativity of $\max_{1\leq j\leq p}|G_j|$, we can bound its expectation from below:
\begin{equation*}
\Eb[\max_{1\leq j\leq p}|G_j|] \ge \Eb[\max_{1\leq j\leq p}|G_j| \cdot \mathbf{1}{\{\max_{1\leq j\leq p}|G_j| > t_p\}}] \ge t_p \mathbb{P}(\max_{1\leq j\leq p}|G_j| > t_p).
\end{equation*} Therefore,
\begin{equation*}
\liminf_{p\to\infty} \frac{\Eb[\max_{1\leq j\leq p}|G_j|]}{\sqrt{\log(2p)}} \ge \alpha\sqrt{2}.
\end{equation*}
Since this holds for any $\alpha \in (0,1)$, taking the limit as $\alpha \to 1^{-}$ produces:
\begin{equation}\label{eq:prop:upper:3}
\liminf_{p\to\infty} \frac{\Eb[\max_{1\leq j\leq p}|G_j|]}{\sqrt{\log(2p)}} \ge \sqrt{2}.
\end{equation} Combining \eqref{eq:prop:upper:1}---\eqref{eq:prop:upper:3} implies that
\begin{equation}\label{eq:prop:upper:4}
    \lim_{p\to\infty} \lim_{n\to\infty} \frac{\Eb\max_{1\leq j\leq p}\Abs{n^{-1}\sum_{i=1}^n\epsilon_{ij}}}{\min\set{1,\sqrt{\log(2p)/n}}} = \lim_{p\to\infty} \lim_{n\to\infty} \frac{\Eb\max_{1\leq j\leq p}\Abs{n^{-1}\sum_{i=1}^n\epsilon_{ij}}}{\sqrt{\log(2p)/n}}=\sqrt{2}.
\end{equation} Hence, \eqref{eq:prop:upper:1} and \eqref{eq:prop:upper:4} complete the proofs.
    
\end{proof}

\begin{proof}[proof of Theorem~\ref{thm:sharp}]
    First take \(n=2\) and \(p=1\). Then
    \begin{equation*}
        \Eb\Abs{\frac{\epsilon_{11}+\epsilon_{21}}{2}}=\frac12.
    \end{equation*} The assumed inequality therefore gives $\frac{1}{2} \geq \min\set{c,C\sqrt{(\log 2)/2}}$.
    If $c>1/2$, then the minimum above cannot be attained by \(c\). Hence
    \begin{equation*}
        C\sqrt{\frac{\log 2}{2}}\leq \frac12,\quad\mbox{and therefore},\quad C\leq \frac{1}{\sqrt{2\log 2}}.
    \end{equation*}
    Next take \(n=2\) and \(p=8\). For each \(1\leq j\leq8\), the eight events \(\{\epsilon_{1j}=\epsilon_{2j}\}\), \(1\leq j\leq8\), are independent and each has probability \(1/2\). Thus
    \begin{equation*}
        \Eb\max_{1\leq j\leq8}
        \Abs{\frac{\epsilon_{1j}+\epsilon_{2j}}{2}}
        =
        1-\left(\frac12\right)^8
        =
        \frac{255}{256}.
    \end{equation*}
    The assumed inequality gives
    \begin{equation*}
        \frac{255}{256}
        \geq
        \min\Set{c,C\sqrt{\frac{\log 16}{2}}}.
    \end{equation*}
    If \(C\geq 1/\sqrt{2\log 2}\), then
    \begin{equation*}
        c\leq \frac{255}{256}.
    \end{equation*}
\end{proof}

\section{Useful Lemmas and Propositions}
\begin{proposition}\label{prop:gaussian-input}
Let \(G_1,\ldots,G_p\) ($p\in\Nb$) be independent standard normal random variables. Then
\begin{equation*}
    \inf_{p\geq 1}\frac{\Eb \max_{1\leq j\leq p}|G_j|}{\sqrt{\log(2p)}} = \sqrt{\frac{2}{\pi\log 2}}.
\end{equation*}
\end{proposition}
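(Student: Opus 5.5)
The infimum is attained at $p=1$. There $\Eb|G_1|=\sqrt{2/\pi}$ and $\log(2p)=\log 2$, so the ratio equals $\sqrt{2/(\pi\log 2)}\approx 0.958$. The statement is therefore equivalent to
\begin{equation*}
    a_p:=\Eb\max_{1\leq j\leq p}|G_j|\;\geq\;\sqrt{\frac{2}{\pi}}\sqrt{\frac{\log(2p)}{\log 2}},\qquad p\geq 1,
\end{equation*}
with equality at $p=1$. I will split into a large-$p$ regime, handled analytically, and a finite range of $p$, handled by rigorous numerical evaluation. This mirrors the structure of Proposition~\ref{prop:analytic} and Lemma~\ref{lem:finite-certificate}.

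\textbf{Large $p$.} I will use the argument from the proof of Proposition~\ref{prop:upper}, but with explicit constants in place of limits. For a threshold $t>0$, independence and $1-x\leq e^{-x}$ give
\begin{equation*}
    a_p\geq t\left[1-\exp\{-2p(1-\Phi(t))\}\right].
\end{equation*}
Gordon's Mills-ratio bound then gives
\begin{equation*}
    2p(1-\Phi(t))\geq \frac{2p}{\sqrt{2\pi}}\,\frac{t}{t^2+1}\,e^{-t^2/2}.
\end{equation*}
I will take $t=\alpha\sqrt{2\log(2p)}$ with a fixed $\alpha$ close to $1$, say $\alpha=0.9$. The resulting lower bound is
\begin{equation*}
    \alpha\sqrt{2}\sqrt{\log(2p)}\,\bigl[1-\exp\{-c\,(2p)^{1-\alpha^2}/\sqrt{\log(2p)}\}\bigr].
\end{equation*}
Its bracket is increasing in $p$ once $p$ is moderately large. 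The target constant $\sqrt{2/(\pi\log 2)}\approx0.958$ is well below $\alpha\sqrt2\approx1.27$, so the bound exceeds the target for all $p\geq P_0$ with an explicit, modest $P_0$. A single check at $p=P_0$ suffices once monotonicity of the bracket is verified by differentiating in $p$.

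\textbf{Small $p$.} For $2\leq p<P_0$ I will use the exact formula
\begin{equation*}
    a_p=\int_0^\infty\left[1-(2\Phi(x)-1)^p\right]dx,
\end{equation*}
evaluated with certified quadrature. One option is to bound the integrand on a grid using monotonicity in $x$, giving lower Riemann sums, and to control the tail beyond some $X$ by $p\,e^{-X^2/2}$-type bounds. Each $a_p$ is then compared with $\sqrt{2\log(2p)/(\pi\log 2)}$. As a cheaper route for small $p$, one could instead use $a_p\geq a_{p'}$ for $p'\leq p$ together with the known value $a_2=\frac{2}{\sqrt\pi}\cdot\frac{\sqrt2}{\ldots}$-type closed forms. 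The certified grid is the simplest uniform method, though, and I would use it.

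\textbf{Conclusion and main obstacle.} Combining the two regimes shows the ratio is at least its $p=1$ value for every $p\geq1$, which gives the infimum. The main obstacle is making the large-$p$ threshold $P_0$ small enough that the finite check is feasible, since the Mills-ratio losses are sizable when $p$ is moderate. If $\alpha=0.9$ gives too large a $P_0$, I would optimize $t$ numerically for each $p$ near the cutoff, or use a smaller $\alpha$ where the prefactor is still sufficient.
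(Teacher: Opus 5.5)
\textbf{Gap: the finite check fails at $p=2$.} Your split into an analytic large-$p$ regime plus a finite check is reasonable, but the finite check as you set it up cannot succeed at $p=2$. The infimum is attained there as well as at $p=1$. Indeed $\max(|x|,|y|)=\tfrac12(|x+y|+|x-y|)$ and $G_1\pm G_2\sim N(0,2)$, so $a_2=\sqrt2\cdot\sqrt{2/\pi}=2/\sqrt\pi$. This is exactly the target $\sqrt{2/(\pi\log2)}\,\sqrt{\log4}$. A certified quadrature (lower Riemann sums, truncated tail) returns a number strictly below $a_2$, so it can never certify an inequality that holds with equality. Your fallback $a_p\geq a_{p'}$ does not help either, since it only gives $a_2\geq a_1=\sqrt{2/\pi}<2/\sqrt\pi$. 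The case $p=2$ must be settled in closed form, as the paper does for $p=1,2$. Once that is done, the remaining small cases have strictly positive margin (the ratio is already about $0.99$ at $p=3$), and numerical certification is legitimate there.

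\textbf{Size of $P_0$, and how the paper differs.} Your estimate of $P_0$ is too optimistic. Write $c_*=\sqrt{2/(\pi\log2)}$. The bound $t\,\Pb(\max_j|G_j|>t)$ discards all mass above $t$ and is very lossy at moderate $p$: at $p=9$ even the best $t$ gives less than $1.12$, against the target $c_*\sqrt{\log18}\approx1.63$. With $\alpha=0.9$ you need $2p(1-\Phi(t))\geq1.40$. Under Gordon's bound this first holds when $\log(2p)$ is about $15$, i.e.\ $p\approx2\times10^{6}$. Taking $\alpha\approx0.7$, or optimizing $t$ for each $p$, brings $P_0$ down to a few hundred. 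That is workable, but it leaves hundreds of certified quadratures.

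The paper instead writes $\max_j|G_j|\overset{d}{=}\Phi^{-1}\bigl((1+\max_jU_j)/2\bigr)$ with $U_j$ independent uniforms. Convexity of $x\mapsto\Phi^{-1}((1+x)/2)$ and Jensen's inequality then give $a_p\geq\Phi^{-1}\bigl((2p+1)/(2p+2)\bigr)$. A Mills-ratio monotonicity argument shows this exceeds $c_*\sqrt{\log(2p)}$ for all $p\geq9$ (at $p=9$: $\Phi^{-1}(0.95)\approx1.645>1.629$). Only $3\leq p\leq8$ are then left to numerics.
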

\begin{proof}[proof of Proposition~\ref{prop:gaussian-input}.] It is clear that for $p=1,2$,
\begin{equation*}
    \frac{\Eb \max_{1\leq j\leq p}|G_j|}{\sqrt{\log(2p)}} = \sqrt{\frac{2}{\pi\log 2}}.
\end{equation*} Therefore, one has
\begin{equation*}
    \inf_{p\geq 1}\frac{\Eb \max_{1\leq j\leq p}|G_j|}{\sqrt{\log(2p)}} \leq \sqrt{\frac{2}{\pi\log 2}}=:c_*.
\end{equation*} In fact, one can numerically verify that
\begin{equation*}
    \inf_{1\leq p\leq 8}\frac{\Eb \max_{1\leq j\leq p}|G_j|}{\sqrt{\log(2p)}} =c_*.
\end{equation*}

Assume $p\geq 9.$ Let $U_1,\ldots,U_p\sim {\rm Unif}[0,1]$ be independent. It is well known that
\begin{equation*}
    \max_{1\leq j\leq p}|G_j| \overset{d}{=}\max_{1\leq j\leq p}\Abs{\Phi^{-1}\left(U_j\right)}\overset{d}{=}\Phi^{-1}\left(\frac{1+\max_{1\leq j\leq p}U_j}{2}\right),
\end{equation*} where $\overset{d}{=}$ means the distribution equivalence. Since $x\mapsto \Phi^{-1}(\frac{1+x}{2})$ is convex on $(0,1)$, it follows from Jensen's inequality that
\begin{equation*}
    \Eb\left[\max_{1\leq j\leq p}|G_j|\right]\geq \Phi^{-1}\left(\frac{1+\Eb[\max_{1\leq j\leq p}U_j]}{2}\right) = \Phi^{-1}\left(\frac{2p+1}{2p+2}\right).
\end{equation*} Define
\begin{equation*}
    h(x) = \log\left((1-\Phi(c_*\sqrt{x}))(e^x+2)\right),\qquad x\geq 0.
\end{equation*} The derivative of $h$ is controlled via the bound for Mill's ratio \citep{Gordon1941Mill} as
\begin{align*}
    h'(x) = \frac{e^x}{e^x+2}-\frac{c_*}{2\sqrt{x}}\frac{\phi(c_*\sqrt{x})}{1-\Phi(c_*\sqrt{x})}\ge \frac{e^x}{e^x+2}-\frac{c_*}{2\sqrt{x}}\frac{c_*\sqrt{x} + \sqrt{c_*^2+x}}{4}\\
    \geq \frac{e^x}{e^x+2}-\frac{c_*^2}{2} -\frac{1}{2x} = \frac{e^x}{e^x+2}-\frac{1}{\pi\log 2} -\frac{1}{2x}.
\end{align*} The right-hand side of the last display is increasing in $x>0$, and it can be verified that $h'(\log(18))>0$. Therefore, for $x\geq \log(18)$, $h(x)\geq h(\log(18))>0,$ and, by taking $x = \sqrt{\log(2p)}$, this implies that for $p\geq 9$,
\begin{equation*}
    1-\Phi(c_*\sqrt{\log(2p)})\geq \frac{1}{2p+2},\quad\mbox{and thus},\quad \Phi^{-1}\left(\frac{2p+1}{2p+2}\right)\geq c_*\sqrt{\log(2p)}.
\end{equation*} Therefore, we have
\begin{equation*}
    \inf_{p\geq 9}\frac{\Eb \max_{1\leq j\leq p}|G_j|}{\sqrt{\log(2p)}} \geq c_*.
\end{equation*} This proves the prposition.
\end{proof}
\section{Full Numerical Verification for Lemma~3.2}
\label{app:lemma32-certificate}

Table~\ref{tab:lemma32-full} reports the complete per-sample-size output for the deterministic verification used in Lemma~3.2.  For each fixed \(n\), \(K_n\) is the number of retained dyadic intervals in the cover of \([\log 4,n]\), ``depth'' is the largest bisection depth among those retained intervals, and ``bound'' is
\begin{equation*}
    \min_{1\leq \ell\leq K_n}
    {L_n(u_{n,\ell})\over \sqrt{n v_{n,\ell}}},
\end{equation*}
rounded downward to six decimal places.  The threshold satisfies
\begin{equation*}
    (2\log 2)^{-1/2}<0.849322.
\end{equation*}
Hence every displayed entry in the last column is strictly larger than the required threshold.  The smallest retained value occurs at \(n=194\), where the displayed lower bound is \(0.849398\).

\begin{center}
\scriptsize
\setlength{\tabcolsep}{3pt}
\begin{longtable}{rrrr@{\qquad}rrrr@{\qquad}rrrr}
\caption{The bounds are rounded downward to six decimal places.}
\label{tab:lemma32-full}\\
\toprule
$n$ & $K_n$ & depth & bound & $n$ & $K_n$ & depth & bound & $n$ & $K_n$ & depth & bound \\
\midrule
\endfirsthead
\toprule
$n$ & $K_n$ & depth & bound & $n$ & $K_n$ & depth & bound & $n$ & $K_n$ & depth & bound \\
\midrule
\endhead
\midrule
\multicolumn{12}{r}{Continued on next page}\\
\endfoot
\bottomrule
\endlastfoot
2 & 4 & 2 & 0.854781 & 3 & 6 & 3 & 0.859041 & 4 & 7 & 4 & 0.870715 \\
5 & 7 & 4 & 0.860133 & 6 & 7 & 4 & 0.853995 & 7 & 8 & 5 & 0.855594 \\
8 & 8 & 5 & 0.854053 & 9 & 8 & 5 & 0.853747 & 10 & 7 & 5 & 0.850499 \\
11 & 7 & 5 & 0.853844 & 12 & 6 & 5 & 0.850370 & 13 & 7 & 6 & 0.853438 \\
14 & 7 & 6 & 0.855898 & 15 & 7 & 6 & 0.857807 & 16 & 7 & 6 & 0.859256 \\
17 & 7 & 6 & 0.860355 & 18 & 7 & 6 & 0.861223 & 19 & 7 & 6 & 0.861977 \\
20 & 7 & 6 & 0.862715 & 21 & 7 & 6 & 0.862129 & 22 & 7 & 6 & 0.858410 \\
23 & 7 & 6 & 0.854722 & 24 & 7 & 6 & 0.851068 & 25 & 8 & 7 & 0.865707 \\
26 & 8 & 7 & 0.866367 & 27 & 8 & 7 & 0.867074 & 28 & 8 & 7 & 0.867819 \\
29 & 8 & 7 & 0.868596 & 30 & 8 & 7 & 0.869309 & 31 & 8 & 7 & 0.869026 \\
32 & 8 & 7 & 0.868793 & 33 & 8 & 7 & 0.868615 & 34 & 8 & 7 & 0.868483 \\
35 & 8 & 7 & 0.868396 & 36 & 8 & 7 & 0.868351 & 37 & 8 & 7 & 0.868344 \\
38 & 8 & 7 & 0.868373 & 39 & 8 & 7 & 0.867738 & 40 & 8 & 7 & 0.865797 \\
41 & 8 & 7 & 0.863866 & 42 & 8 & 7 & 0.861946 & 43 & 8 & 7 & 0.860036 \\
44 & 8 & 7 & 0.858136 & 45 & 8 & 7 & 0.856247 & 46 & 8 & 7 & 0.854368 \\
47 & 8 & 7 & 0.852500 & 48 & 8 & 7 & 0.850643 & 49 & 9 & 8 & 0.870451 \\
50 & 9 & 8 & 0.870755 & 51 & 9 & 8 & 0.871073 & 52 & 9 & 8 & 0.871403 \\
53 & 9 & 8 & 0.871744 & 54 & 9 & 8 & 0.872096 & 55 & 9 & 8 & 0.872457 \\
56 & 9 & 8 & 0.872768 & 57 & 9 & 8 & 0.872551 & 58 & 9 & 8 & 0.872349 \\
59 & 9 & 8 & 0.872165 & 60 & 9 & 8 & 0.871995 & 61 & 9 & 8 & 0.871841 \\
62 & 9 & 8 & 0.871701 & 63 & 9 & 8 & 0.871575 & 64 & 9 & 8 & 0.871462 \\
65 & 9 & 8 & 0.871363 & 66 & 9 & 8 & 0.871276 & 67 & 9 & 8 & 0.871201 \\
68 & 9 & 8 & 0.871138 & 69 & 9 & 8 & 0.871086 & 70 & 9 & 8 & 0.871045 \\
71 & 9 & 8 & 0.871014 & 72 & 9 & 8 & 0.870994 & 73 & 9 & 8 & 0.870983 \\
74 & 9 & 8 & 0.870981 & 75 & 9 & 8 & 0.870709 & 76 & 9 & 8 & 0.869714 \\
77 & 9 & 8 & 0.868722 & 78 & 9 & 8 & 0.867733 & 79 & 9 & 8 & 0.866746 \\
80 & 9 & 8 & 0.865763 & 81 & 9 & 8 & 0.864783 & 82 & 9 & 8 & 0.863806 \\
83 & 9 & 8 & 0.862832 & 84 & 9 & 8 & 0.861861 & 85 & 9 & 8 & 0.860892 \\
86 & 9 & 8 & 0.859927 & 87 & 9 & 8 & 0.858965 & 88 & 9 & 8 & 0.858005 \\
89 & 9 & 8 & 0.857049 & 90 & 9 & 8 & 0.856096 & 91 & 9 & 8 & 0.855145 \\
92 & 9 & 8 & 0.854198 & 93 & 9 & 8 & 0.853253 & 94 & 9 & 8 & 0.852311 \\
95 & 9 & 8 & 0.851373 & 96 & 9 & 8 & 0.850437 & 97 & 9 & 8 & 0.849504 \\
98 & 10 & 9 & 0.873028 & 99 & 10 & 9 & 0.873176 & 100 & 10 & 9 & 0.873327 \\
101 & 10 & 9 & 0.873482 & 102 & 10 & 9 & 0.873640 & 103 & 10 & 9 & 0.873801 \\
104 & 10 & 9 & 0.873965 & 105 & 10 & 9 & 0.874132 & 106 & 10 & 9 & 0.874302 \\
107 & 10 & 9 & 0.874474 & 108 & 10 & 9 & 0.874642 & 109 & 10 & 9 & 0.874512 \\
110 & 10 & 9 & 0.874385 & 111 & 10 & 9 & 0.874264 & 112 & 10 & 9 & 0.874147 \\
113 & 10 & 9 & 0.874034 & 114 & 10 & 9 & 0.873926 & 115 & 10 & 9 & 0.873822 \\
116 & 10 & 9 & 0.873722 & 117 & 10 & 9 & 0.873626 & 118 & 10 & 9 & 0.873534 \\
119 & 10 & 9 & 0.873446 & 120 & 10 & 9 & 0.873362 & 121 & 10 & 9 & 0.873281 \\
122 & 10 & 9 & 0.873205 & 123 & 10 & 9 & 0.873132 & 124 & 10 & 9 & 0.873062 \\
125 & 10 & 9 & 0.872996 & 126 & 10 & 9 & 0.872934 & 127 & 10 & 9 & 0.872875 \\
128 & 10 & 9 & 0.872819 & 129 & 10 & 9 & 0.872767 & 130 & 10 & 9 & 0.872718 \\
131 & 10 & 9 & 0.872671 & 132 & 10 & 9 & 0.872628 & 133 & 10 & 9 & 0.872588 \\
134 & 10 & 9 & 0.872551 & 135 & 10 & 9 & 0.872517 & 136 & 10 & 9 & 0.872486 \\
137 & 10 & 9 & 0.872458 & 138 & 10 & 9 & 0.872432 & 139 & 10 & 9 & 0.872409 \\
140 & 10 & 9 & 0.872389 & 141 & 10 & 9 & 0.872372 & 142 & 10 & 9 & 0.872357 \\
143 & 10 & 9 & 0.872344 & 144 & 10 & 9 & 0.872334 & 145 & 10 & 9 & 0.872327 \\
146 & 10 & 9 & 0.872321 & 147 & 10 & 9 & 0.872242 & 148 & 10 & 9 & 0.871737 \\
149 & 10 & 9 & 0.871234 & 150 & 10 & 9 & 0.870731 & 151 & 10 & 9 & 0.870229 \\
152 & 10 & 9 & 0.869728 & 153 & 10 & 9 & 0.869227 & 154 & 10 & 9 & 0.868728 \\
155 & 10 & 9 & 0.868229 & 156 & 10 & 9 & 0.867731 & 157 & 10 & 9 & 0.867234 \\
158 & 10 & 9 & 0.866738 & 159 & 10 & 9 & 0.866243 & 160 & 10 & 9 & 0.865748 \\
161 & 10 & 9 & 0.865254 & 162 & 10 & 9 & 0.864761 & 163 & 10 & 9 & 0.864269 \\
164 & 10 & 9 & 0.863777 & 165 & 10 & 9 & 0.863287 & 166 & 10 & 9 & 0.862797 \\
167 & 10 & 9 & 0.862308 & 168 & 10 & 9 & 0.861820 & 169 & 10 & 9 & 0.861332 \\
170 & 10 & 9 & 0.860845 & 171 & 10 & 9 & 0.860360 & 172 & 10 & 9 & 0.859874 \\
173 & 10 & 9 & 0.859390 & 174 & 10 & 9 & 0.858907 & 175 & 10 & 9 & 0.858424 \\
176 & 10 & 9 & 0.857942 & 177 & 10 & 9 & 0.857461 & 178 & 10 & 9 & 0.856980 \\
179 & 10 & 9 & 0.856501 & 180 & 10 & 9 & 0.856022 & 181 & 10 & 9 & 0.855544 \\
182 & 10 & 9 & 0.855066 & 183 & 10 & 9 & 0.854590 & 184 & 10 & 9 & 0.854114 \\
185 & 10 & 9 & 0.853639 & 186 & 10 & 9 & 0.853165 & 187 & 10 & 9 & 0.852691 \\
188 & 10 & 9 & 0.852219 & 189 & 10 & 9 & 0.851747 & 190 & 10 & 9 & 0.851275 \\
191 & 10 & 9 & 0.850805 & 192 & 10 & 9 & 0.850335 & 193 & 10 & 9 & 0.849866 \\
194 & 10 & 9 & 0.849398 & 195 & 11 & 10 & 0.874256 & 196 & 11 & 10 & 0.874328 \\
197 & 11 & 10 & 0.874401 & 198 & 11 & 10 & 0.874474 & 199 & 11 & 10 & 0.874549 \\
200 & 11 & 10 & 0.874624 & 201 & 11 & 10 & 0.874701 & 202 & 11 & 10 & 0.874778 \\
203 & 11 & 10 & 0.874856 & 204 & 11 & 10 & 0.874934 & 205 & 11 & 10 & 0.875014 \\
206 & 11 & 10 & 0.875094 & 207 & 11 & 10 & 0.875175 & 208 & 11 & 10 & 0.875257 \\
209 & 11 & 10 & 0.875339 & 210 & 11 & 10 & 0.875423 & 211 & 11 & 10 & 0.875506 \\
212 & 11 & 10 & 0.875591 & 213 & 11 & 10 & 0.875549 & 214 & 11 & 10 & 0.875479 \\
215 & 11 & 10 & 0.875410 & 216 & 11 & 10 & 0.875342 & 217 & 11 & 10 & 0.875276 \\
218 & 11 & 10 & 0.875210 & 219 & 11 & 10 & 0.875146 & 220 & 11 & 10 & 0.875083 \\
221 & 11 & 10 & 0.875022 & 222 & 11 & 10 & 0.874961 & 223 & 11 & 10 & 0.874901 \\
224 & 11 & 10 & 0.874843 & 225 & 11 & 10 & 0.874786 & 226 & 11 & 10 & 0.874729 \\
227 & 11 & 10 & 0.874674 & 228 & 11 & 10 & 0.874620 & 229 & 11 & 10 & 0.874567 \\
230 & 11 & 10 & 0.874515 & 231 & 11 & 10 & 0.874464 & 232 & 11 & 10 & 0.874414 \\
233 & 11 & 10 & 0.874365 & 234 & 11 & 10 & 0.874318 & 235 & 11 & 10 & 0.874271 \\
236 & 11 & 10 & 0.874225 & 237 & 11 & 10 & 0.874180 & 238 & 11 & 10 & 0.874136 \\
239 & 11 & 10 & 0.874093 & 240 & 11 & 10 & 0.874051 & 241 & 11 & 10 & 0.874010 \\
242 & 11 & 10 & 0.873970 & 243 & 11 & 10 & 0.873931 & 244 & 11 & 10 & 0.873893 \\
245 & 11 & 10 & 0.873855 & 246 & 11 & 10 & 0.873819 & 247 & 11 & 10 & 0.873784 \\
248 & 11 & 10 & 0.873749 & 249 & 11 & 10 & 0.873715 & 250 & 11 & 10 & 0.873682 \\
251 & 11 & 10 & 0.873650 & 252 & 11 & 10 & 0.873619 & 253 & 11 & 10 & 0.873589 \\
254 & 11 & 10 & 0.873560 & 255 & 11 & 10 & 0.873531 & 256 & 11 & 10 & 0.873503 \\
257 & 11 & 10 & 0.873476 & 258 & 11 & 10 & 0.873450 & 259 & 11 & 10 & 0.873425 \\
260 & 11 & 10 & 0.873400 & 261 & 11 & 10 & 0.873376 & 262 & 11 & 10 & 0.873353 \\
263 & 11 & 10 & 0.873331 & 264 & 11 & 10 & 0.873310 & 265 & 11 & 10 & 0.873289 \\
266 & 11 & 10 & 0.873269 & 267 & 11 & 10 & 0.873250 & 268 & 11 & 10 & 0.873232 \\
269 & 11 & 10 & 0.873214 & 270 & 11 & 10 & 0.873197 & 271 & 11 & 10 & 0.873181 \\
272 & 11 & 10 & 0.873165 & 273 & 11 & 10 & 0.873151 & 274 & 11 & 10 & 0.873136 \\
275 & 11 & 10 & 0.873123 & 276 & 11 & 10 & 0.873110 &  &  &  &  \\
\end{longtable}
\end{center}
\bibliographystyle{plainnat}
\bibliography{bib}
\end{document}